\newcommand{\R}{{\mathbb R}}
\newcommand{\Z}{\mathbb{Z}}
\newcommand{\D}{\mathcal{D}}
\newcommand{\E}{\mathbb E}
\newtheorem{assump}{Assumption}
\author{Jamil Salhi, James MacLaurin and Salwa Toumi}
\begin{document}
\title{  On Uniform Propagation of Chaos and application }
\maketitle{Abstract}
\begin{abstract}
In this paper we obtain time uniform propagation estimates for systems of interacting diffusion processes. 
Using a well defined metric function $h$ , our result guarantees a time-uniform estimates for the convergence of a class of interacting stochastic differential equations towards their mean field equation, and this for a general model, satisfying various conditions ensuring that the decay associated to the internal dynamics term dominates
the interaction and noise terms. Our result should have diverse applications, particularly in neuroscience, 
and allows for models more elaborate than the one of Wilson and Cowan, not requiring the internal dynamics to be of 
linear decay. An example is given at the end of this work as an illustration of the interest of this result.
\end{abstract}
\footnote{Contact: jamil.salhi@enit.rnu.tn, j.maclaurin@sydney.edu.au, salwa.toumi@lamsin.rnu.tn}
 \bigskip

\begin{keywords} Stochastic differential equation, Mean Fields,  McKean-Vlasov equations, Interacting Diffusion,
 Uniform Propagation of Chaos, Neural Network.
\end{keywords}

\begin{classcode}  60K35, 60J60, 60J65, 92B20	\end{classcode}\bigskip

\section{Introduction}
In this paper we obtain time-uniform estimates for the convergence of a class of interacting diffusion stochastic
differential equations towards the associated mean field equation. The propagation of chaos resulting from this convergence 
when the number of particles $N$ tends to infinity is uniform in time which means that not only the particles are independent  of each other, but also this independence is reached uniformly in time. 

The $N$-particle interacting diffusion model is of the following form
\begin{equation}
X^j_t = x_{ini} + \int_0^t b_0(X^j_s) + \frac{1}{N}\sum_{k=1}^N b_1(X^j_s,X^k_s) ds + \int_0^t b_2(X^j_s)dW^j_s.\label{eqn base system}
\end{equation}
Here $(W^j)_{j\in\Z^+}$ are independent Wiener Processes, $x_{ini}$ is a constant and $b_0,b_2: \R \to \R$,
$b_1: \R\times\R \to \R$, 
 are measurable functions. We will explain further below our reasons for studying this type of model.
 For a probability measure on $\R$, $\gamma$, write $\bar{b}_1(x,\gamma) = \int_{\R}b_1(x,y)d\gamma(y)$. 
The limiting processes $(\bar{X}^j_t)$ are defined to be
\begin{equation}
\bar{X}^j_t = x_{ini} + \int_0^t b_0(\bar{X}^j_s) + \bar{b}_1(\bar{X}^j_s,\bar{\mu}_s)ds + \int_0^t b_2(\bar{X}^j_s)dW_s\label{eqn limit system},
\end{equation}
where $\bar{\mu}_s$ is the law of $\bar{X}^j_s$.
 The classical propagation of chaos result states that, under suitable conditions 
on $b_0,b_1$ and $b_2$, the probability law of $X^j_t$ over some fixed time interval $[0,T]$
 (this being a probability law on $C([0,T];\R)$), converges weakly to the probability law of
 $\bar{X}^j_t$. 
Refer to \cite{sznitman:91,baladron-fasoli-etal:12b,bossy-faugeras-etal:15}
for more details.
We briefly consider the following toy model to motivate our problem.  Consider for the moment the system
\begin{equation}
Y^j_t = y_{ini} + \frac{1}{N}\int_0^t \sum_{k=1}^N b_1(Y^j_s,Y^k_s)ds + W^j_t,
\end{equation}
where $b_1$ has Lipschitz constant $\tilde{b}^{Lip}_N$. Define
\begin{equation}
\bar{Y}^j_t = Y_{ini} +\frac{1}{N} \int_0^t \int_{\R}b_1(\bar{Y}^j_s,y)d\tilde{\mu}_s(y)ds + W^j_t,
\end{equation}
where $\tilde{\mu}_s$ is the law of $\bar{Y}^j_s$. Assume that both of the above equations have strong solutions.
 Using Gronwall's Inequality and the Cauchy-Schwartz inequality, \cite{sznitman:91} obtained a bound of the form 
\[
\E\left[ \sup_{t\in [0,T]}|Y^j_t - \bar{Y}^j_t|\right] \leq\exp\left( 2 T \tilde{b}^{Lip}_N\right)\sup_{s\in [0,T]}\E\left[b_1(\bar{Y}^j_s,\bar{Y}^k_s)^2\right]^{\frac{1}{2}}.
\]
It is clear from the above that $\bar{Y}^j_t$ is a good approximation to $Y^j_t$ when $NT\tilde{b}^{Lip}_N \ll 1$. 
It is also clear that as $T\to \infty$, this bound becomes very poor, particularly due to the exponentiation. In much modeling of interacting diffusions, such as neuroscience, it is difficult to assume that $T$ is small: indeed, often it is difficult to properly model the `start' of a system. It is therefore desirable to obtain convergence results which are uniform in time. This is the focus of this paper.

For $x,y\in \R$, let
\begin{equation}
h(x,y) = g(x)g(y)f(x-y),
\end{equation}
for some functions $f\geq 0$ and $g\geq 1$ described further below. We expect (but do not require) $f$ to be of the form $f(z) = f_{const} z^{2k}$ where $k$ is a positive integer. $f$ modulates 
the rate of convergence for when $X^j_t$ is `close' to $\bar{X}^j_t$. $g\geq 1$ is a weight function which modulates the
 behavior for when $|X^j_t|$ or $|\bar{X}^j_t|$ asymptote to $\infty$. If $h$ is a metric, then this result guarantees that
 the Wasserstein Distance (with respect to $h$) between the laws of $X^j_t$ and $\bar{X}^j_t$ converges to zero 
as $N\to\infty$, with a rate which is uniform in $t$.
As a consequence of Theorem \ref{theorem major result}, and since $h$ is a metric, the result guarantees that the joint
 law of any finite set of neurons(or particles in the  general case)
converges to a tensor product of iid processes, each with law given by the SDE in (\ref{eqn limit system}).
 It is easily verified
as explained in Corollary \ref{corollary1}.
 To the best of our knowledge, the first work on 
uniform propagation of chaos was 
\cite{moral-miclo:00}
 when approximating Feynman Kac Formula for non linear filtering. Other authors applied Log-Sobolev inequalities and 
concentration inequalities  \cite{malrieu:01,carrillo-mccann-etal:03,veretennikov:06,cattiaux-guillin-etal:08,moral-rio:11,
bolley-gentil-etal:13,moral-tugaut:14}.
 Most of the previously cited works assume that the interaction term is of the 
form $b_1(x,y) = \nabla F(x-y)$ and the local term is of the form $\hat{b}_0 = \nabla V$ for some $F,V$ satisfying 
certain convexity properties. This work is essentially a generalization of \cite{veretennikov:06}.

We are motivated in particular by the application of these models to neuroscience 
(see for instance \cite{wilson-cowan:72,hansel-sompolinsky:96,gerstner-kistler:02b,deco-jirsa-etal:08,destexhe-sejnowski:09,coombes:10,touboul-ermentrout:11,bressloff:12,touboul:14})
although we expect in fact that these results are applicable in other domains such as agent-based modeling in finance, 
insect-swarms, granular models and various other applications of statistical physics. We have been able to weaken some 
of the requirements in \cite{veretennikov:06} and other works, so that the results may be applied in arguably more biologically 
realistic contexts. We do not assume that the interaction term $b_1(x,y)$ is a function of $x-y$, as in many 
of the previously cited works. The uniform propagation of chaos result is essentially due to the stabilizing effect of
 the internal dynamics ($b_0$ term) outweighing the destabilizing effect of the inputs from other neurons ($b_1$ term) and 
the noise ($b_2$ term). In \cite{veretennikov:06}, it was assumed that the gradient of $b_0$ is always negative, and is at least 
linear. However it is not clear (at least in the context of neuroscience) that the decay resulting from the internal
 dynamics term is always this strong for large values of $|X^j_t|$. Neuroscientific models are only experimentally validated 
over a finite parameter range, and therefore it is not certain how to model the dynamics for when the state variable
 $X^j_t$ is very large or small. Our more abstract setup does not require the decay to be linear (as in for example the
 Wilson-Cowan model) for large values of $|X^j_t|$: indeed the decay could be sub-linear or super-linear; all that is
 required is that in the asymptotic limit the decay from $b_0$ dominates the destabilizing effects of $b_1$ and $b_2$.
 Another improvement of our model over \cite{veretennikov:06} is that we consider multiplicative noise (i.e. $b_2 \neq 1$).
 This is more realistic because we expect the noise term $ \int_0^t b_2(X^j_s)dW^j_s$ to be of decreasing influence
 as $|X^j_t|$ gets large. This is because one would expect in general that the system is less responsive to the noise when its activity is greatly elevated, since the system should be stable. The point is that experimentalists should have some liberty in fitting our model to experimental 
data; all that is required is that in the asymptotic limit the decay from $b_0$ dominates the destabilizing effects of $b_1$
 and $b_2$.

We do not delve into the details of existence and uniqueness of solutions, and so throughout we assume that
\begin{assump}\label{assumption one}
There exist unique strong solutions to \eqref{eqn base system} and \eqref{eqn limit system}.
\end{assump}

Our major result is the following uniform convergence property.
\begin{theorem}\label{theorem major result}
If Assumption \ref{assumption one} and the assumptions in Section \ref{Sect assumptions} hold, then there exists a constant $K$ such that for all $t\geq 0$
\[
\E\left[ h(X^j_t,\bar{X}^j_t)\right] \leq K N^{-\frac{a}{q(a-1)}},
\]
for integers $a>1$ and $q \geq 1$.

\end{theorem}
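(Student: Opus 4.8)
The plan is to control the time derivative of $\E[h(X^j_t,\bar X^j_t)]$ and to show that the dissipation generated by the internal dynamics $b_0$ produces a genuinely negative drift, so that no exponential-in-time factor ever appears. To this end I would first couple the two systems synchronously, driving $\bar X^j$ by the same Brownian motion $W^j$ that drives $X^j$; this makes $(X^j_t,\bar X^j_t)$ a two-dimensional diffusion to which It\^o's formula applies. Writing $\mathcal L$ for its generator and recalling $h(x,y)=g(x)g(y)f(x-y)$, the martingale part vanishes under expectation and one obtains
\[
\frac{d}{dt}\E\big[h(X^j_t,\bar X^j_t)\big]=\E\big[\mathcal L h(X^j_t,\bar X^j_t)\big].
\]
Everything then reduces to estimating $\E[\mathcal L h]$ from above by $-c\,\E[h]$ plus a small remainder.

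Next I would decompose $\mathcal L h$ according to the three mechanisms. The first order part splits into the internal dynamics contribution, involving $b_0(X^j)-b_0(\bar X^j)$ weighted by $f'(X^j-\bar X^j)$ together with the $g'$ terms; the interaction contribution, coming from $\frac{1}{N}\sum_{k}b_1(X^j,X^k)$ measured against $\bar b_1(\bar X^j,\bar\mu)$; and the second order part generated by $b_2$, where the synchronous coupling produces the favourable cross term $\partial_{xy}h\,b_2(X^j)b_2(\bar X^j)$. The structural hypotheses of Section \ref{Sect assumptions} are designed precisely so that, after combining these pieces, the $b_0$ term dominates both the $b_2$ diffusion term and the deterministic part of the interaction, yielding a pointwise bound of the form $\mathcal L h(x,y)\le -c\,h(x,y)+(\mbox{interaction fluctuation})$ for some $c>0$.

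The heart of the argument is the interaction fluctuation, which I would split as
\[
\frac{1}{N}\sum_{k}b_1(X^j,X^k)-\bar b_1(\bar X^j,\bar\mu)=\underbrace{\frac{1}{N}\sum_{k}\big(b_1(X^j,X^k)-b_1(\bar X^j,\bar X^k)\big)}_{\mbox{coupling error}}+\underbrace{\frac{1}{N}\sum_{k}b_1(\bar X^j,\bar X^k)-\bar b_1(\bar X^j,\bar\mu)}_{\mbox{statistical fluctuation}}.
\]
The coupling error is Lipschitz controlled by $h$ and can be absorbed into the $-c\,h$ term after shrinking $c$. The statistical fluctuation is an empirical average of independent, identically distributed contributions centred at their common mean (the $\bar X^k$ being i.i.d.\ with law $\bar\mu$), so a moment estimate gives its decay in $N$. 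Separating the weight factor $g(X^j)g(\bar X^j)f'(X^j-\bar X^j)$ from the fluctuation by Young's inequality with conjugate exponents built from $a$, absorbing the weight part into $h$, and invoking a uniform-in-time bound on the $q$-th moments of the mean field law $\bar\mu_t$, would produce
\[
\E\big[\mathcal L h(X^j_t,\bar X^j_t)\big]\le -c\,\E\big[h(X^j_t,\bar X^j_t)\big]+K'N^{-\frac{a}{q(a-1)}},
\]
with $c,K'$ independent of $t$. I expect this step, in particular securing the time-uniform moment control of the mean field process and verifying that the Young-split weight is genuinely dominated by $h$, to be the main obstacle.

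Finally I would integrate this differential inequality. Because $h(x_{ini},x_{ini})=g(x_{ini})^2f(0)=0$ (the metric property forces $f(0)=0$), the Gronwall comparison with the \emph{negative} coefficient $-c$ leaves no surviving initial contribution and gives $\E[h(X^j_t,\bar X^j_t)]\le (K'/c)\,N^{-a/(q(a-1))}$ for every $t\ge0$. Setting $K=K'/c$ completes the proof; the absence of any factor $e^{+ct}$ is exactly what delivers the time uniformity.
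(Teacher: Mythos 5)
Your proposal is correct and follows the same architecture as the paper's proof: synchronous coupling, It\^o's formula applied to $h$, a pointwise estimate showing that the $b_0$ dissipation dominates the $b_2$ and deterministic $b_1$ contributions (this is where the paper's split of $\R$ into $\D$ and $\D^c$ and the assumptions \eqref{eq I1 uniform bound}--\eqref{eq c2 bound} do their work, which you correctly delegate to the structural hypotheses), and a decomposition of the interaction error into a coupling term absorbed into $-c\,\E[h]$ via \eqref{eq c1 bound 1}--\eqref{eq c1 bound 2} and an i.i.d.\ statistical fluctuation whose $q$-th moment decays like $N^{-1}$ (the paper's Lemma \ref{lemma bound polynomial}). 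The one genuine divergence is in how the differential inequality is closed. The paper keeps the fluctuation term in the sublinear form $C N^{-1/q}\,\E[h]^{1/a}$ --- obtained by a three-way H\"older inequality with exponents $a$, $\frac{aq}{aq-a-q}$, $q$ together with \eqref{eq f differential bound} --- and then invokes the nonlinear comparison Lemma \ref{lemma ut}, whose fixed point $(\mathcal{C}/c)^{a/(a-1)}$ produces the exponent $-\frac{a}{q(a-1)}$. You instead apply Young's inequality with conjugate exponents $a$ and $\frac{a}{a-1}$ to linearize the remainder into $\epsilon\,\E[h]+K'N^{-\frac{a}{q(a-1)}}$ and finish with ordinary linear Gronwall from $u(0)=0$. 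The two routes are equivalent: Young's inequality $xy\le \frac{x^a}{a}+\frac{a-1}{a}y^{\frac{a}{a-1}}$ with $x=\E[h]^{1/a}$ and $y=CN^{-1/q}$ reproduces exactly the paper's rate, and the same moment hypotheses \eqref{eq b1 q bound} and \eqref{eq g bound} are what make your ``weight absorbed into $h$'' step legitimate (the exponent $\frac{2(a-1)q}{aq-a-q}$ in \eqref{eq g bound} is precisely the H\"older conjugate your split requires). Your version is marginally more elementary, needing no nonlinear ODE lemma; the paper's version avoids introducing an auxiliary $\epsilon$ and a further shrinking of $c$. Either way the absence of a factor $e^{ct}$ and the rate $N^{-\frac{a}{q(a-1)}}$ come out identically.
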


It is easy to show existence and uniqueness if, for example, $b_0,b_1$ and $b_2$ are each globally Lipschitz. In the case
 of existence and uniqueness of \eqref{eqn base system},   \cite[Theorem 3.6]{mao:08} provides a useful general 
criterion. Refer to \cite{bossy-faugeras-etal:15} for a discussion of how to treat the existence and uniqueness of \eqref{eqn limit system}
 in a more general case. 

Our paper is structured as follows. In Section \ref{Sect assumptions} we outline the assumptions of our model, in Section
 \ref{Section Proof} we prove Theorem \ref{theorem major result} and in Section \ref{application}
 we outline an example of a system satisfying the assumptions of Section \ref{Sect assumptions}.

\section{Assumptions}\label{Sect assumptions}
The requirements outlined below might seem quite tedious. However in the next section we consider an
 application which allows us to simplify many of them. We split $\R$ into two domains $\D$ and $\D^c$. $\D\subset
 \R$ is a closed compact interval which we expect the system to be most of the time. Over $\D$, we require that the 
natural convexity of $b_0$ dominates that of $b_1$ and $b_2$. In $\D^c$ we require bounds for when the absolute
 values of the variables are asymptotically large.

Assume that $f \geq 0$, that $f(z) = f(-z)$, $g\geq 1$ and $f(z) = 0$ if and only if $z=0$. Suppose that for $z\in\D$, $g(z) = 1$ and clearly
 $g'(z) = 0$. Write $\hat{b}_0 = - b_0$.
Assume that for all $x,y\in \R$, 
\begin{equation}
f'(x-y)\left(\hat{b}_0(x) - \hat{b}_0(y)\right) -\frac{1}{2}f''(x-y)(b_2(x) - b_2(y))^2\geq 0.\label{eq I1 positive}
\end{equation}
Assume that for all $x,y\in \D$, there exists a constant $c_0 > 0$ such that
\begin{equation}
f'(x-y)\left(\hat{b}_0(x) - \hat{b}_0(y)\right) -\frac{1}{2}f''(x-y)(b_2(x) - b_2(y))^2\geq c_0 f(x-y).\label{eq I1 uniform bound}
\end{equation}
Assume that for all $z\in \R$, there is some $a>1$ such that 
\begin{equation}
f'(z)^a \leq f(z).\label{eq f differential bound}
\end{equation}
Assume that there exists a constant $a_0\in \R $ such that for all $x\notin \D$,
\begin{equation}
\left| \frac{g'(x)}{g(x)}b_2(x)\right| \leq a_0.\label{eq a0 bound}
\end{equation}
Assume that for $x\notin \D$, for all probability measures $\gamma$ and all $y\in\R$, 
\begin{equation}
\frac{g'(x)}{g(x)}\left(\hat{b}_0(x)-\bar{b}_1(x,\gamma)-\frac{f'(x-y)}{f(x-y)}b_2(x)(b_2(x)-b_2(y))-\frac{1}{2}a_0 b_2(x)\right) \geq c_0.\label{large x g bound}
\end{equation}
\begin{equation}
\frac{g''(x)}{g(x)}b_2(x)^2\leq c_2.\label{eq c2 bound}
\end{equation}
Assume that there exist constants $\breve{c}_1,\grave{c}_1\in \R$ such that for all $x,y_1,y_2 \in \R$,
\begin{align}
g(x)^{\frac{2(a-1)}{a}}(b_1(x,y_1)-b_1(x,y_2)) \leq & \breve{c}_1 g(y_1)^{\frac{a-1}{a}} g(y_2)^{\frac{a-1}{a}}
f(y_1-y_2)^{\frac{a-1}{a}}.\label{eq c1 bound 1}\\
\left| b_1(y_1,x) - b_1(y_2,x)\right| \leq& \grave{c}_1f(y_1-y_2)^{\frac{a-1}{a}}.\label{eq c1 bound 2}
\end{align}
Assumption \eqref{eq c1 bound 1} might seems a little strange. If $g(x) \to \infty$ as $x\to \infty$, in the context of neuroscience it would mean that the relative influence of neuron $k$ on neuron $j$ decreases as $X^j_t \to \infty$. This seems biologically reasonable.
We assume that $c_0$ dominates the other terms, i.e.
\begin{equation}
c := c_0 - \breve{c}_1 -\grave{c}_1 - c_2 > 0.\label{eq c definition}
\end{equation}
For some positive integer $q>2$, we require that there exists a constant $C_2$ such that for all $s > 0$,
\begin{align}
\E\left[ \bar{b}_1(\bar{X}_s,\bar{\mu}_s)^q\right] \leq C_2.\label{eq b1 q bound}
\end{align}
Assume that there exists a constant $C_1 > 0$ such that for all $s>0$ and for all $N$,
\begin{equation}
\E\left[ g(\bar{X}_s)^{\frac{2(a-1)q}{aq-a-q}}\right],\E\left[ g(X_s)^{\frac{2(a-1)q}{aq-a-q}}\right] \leq C_1.\label{eq g bound}
\end{equation} 
\section{Proof of Theorem \ref{theorem major result}}\label{Section Proof}

We now outline the proof of Theorem \ref{theorem major result}.
\begin{proof}
We will prove that there exists a constant $C$ such that
\begin{equation}\label{eq final C }
\E\left[ h(X^j_t,\bar{X}^j_t)\right] \leq \int_0^t -c \E\left[ h(X^j_s,\bar{X}^j_s)\right] + CN^{-\frac{1}{q}}\E\left[ h(X^j_s,\bar{X}^j_s)\right]^{\frac{1}{a}}ds.
\end{equation}
The theorem will then follow from the application of Lemma \ref{lemma ut} to the above result.

We observe using Ito's Lemma that
\begin{equation}\label{Ito}
h(X^j_t,\bar{X}^j_t) = I_1 + I_2' + I_2'' + I_3 + I_4 + I_5 +  \int_0^t \left( \frac{\partial h}{\partial x}b_2(X^j_s) + \frac{\partial h}{\partial y}b_2(\bar{X}^j_s)\right)dW^j_s.
 \end{equation}
The $I_j$ are 
\begin{align}
I_1 = &\int_0^t -g(X^j_s)g(\bar{X}^j_s)f'(X^j_s - \bar{X}^j_s)\left(\hat{b}_0(X^j_s) - \hat{b}_0(\bar{X}^j_s)\right)\nonumber\\
&+ \frac{1}{2} g(X^j_s)g(\bar{X}^j_s)f''\left(X^j_s - \bar{X}^j_s\right)(b_2(X^j_s) - b_2(\bar{X}^j_s))^2 ds \label{eq I1}\\
I'_2 = &\int_0^t f(X^j_s - \bar{X}^j_s)g(X^j_s)g'(\bar{X}^j_s)(b_1(\bar{X}^j_s,\bar{\mu}_s) -\hat{b}_0(\bar{X}^j_s))\nonumber\\ &
-f'(X^j_s - \bar{X}^j_s)g(X^j_s)g'(\bar{X}^j_s)b_2(\bar{X}^j_s)(b_2(\bar{X}^j_s) - b_2(X_s^j))ds\\
I_2'' =&\int_0^t f(X^j_s - \bar{X}^j_s)g'(X^j_s)g(\bar{X}^j_s)\left(\bar{b}_1(X^j_s,\hat{\mu}_s) - \hat{b}_0(X^j_s)\right)\nonumber \\ 
&+f'(X^j_s - \bar{X}^j_s)g'(X^j_s)g(\bar{X}^j_s)b_2(X^j_s)(b_2(X^j_s) - b_2(\bar{X}_s^j)) ds\label{eq I2} \\
I_3 =&\int_0^t g(X^j_s)g(\bar{X}^j_s)f'(X^j_s - \bar{X}^j_s)\left(\bar{b}_1(X^j_s,\hat{\mu}) - \bar{b}_1(\bar{X}^j_s,\bar{\mu}_s)\right)ds \label{eq I3}\\ 
I_4 = &\int_0^t f(X^j_s - \bar{X}^j_s)g'(X^j_s)g'(\bar{X}^j_s)b_2(X^j_s)b_2(\bar{X}^j_s)ds \label{eq I4}\\
I_5 =&\frac{1}{2}\int_0^t f(X^j_s - \bar{X}^j_s)\left( g''(X^j_s)g(\bar{X}^j_s) b_2(X^j_s)^2 + g''(\bar{X}^j_s)g(X^j_s) b_2(\bar{X}^j_s)^2\right) ds \label{eq I5}
\end{align}
We start by establishing that
\begin{equation}\label{I1 bound}
E\left[I_1+I'_2+I''_2 + I_4 \right] \leq -c_0 \int_0^t \E\left[ h(X^j_s,\bar{X}^j_s)\right]ds.
\end{equation}
We prove that the sum of the integrands of $I_1$,$I_2'$,$I_2''$ and $I_4$ is less than or equal to $-c_0 h(X^j_s,\bar{X}^j_s)$. Suppose firstly that $X^j_s,\bar{X}^j_s \in \D$. Then the integrands of $I_2',I_2''$ and $I_4$ are all zero. Furthermore, using \eqref{eq I1 uniform bound}, the integrand of $I_1$ satisfies the bound
\begin{multline*}
-g(X^j_s)g(\bar{X}^j_s)f'(X^j_s - \bar{X}^j_s)\left(\hat{b}_0(X^j_s) - \hat{b}_0(\bar{X}^j_s)\right)\\
+ \frac{1}{2} g(X^j_s)g(\bar{X}^j_s)f''\left(X^j_s - \bar{X}^j_s\right)(b_2(X^j_s) - b_2(\bar{X}^j_s))^2 
\leq -c_0 h(X^j_s,\bar{X}^j_s).
\end{multline*}
Now suppose that $\bar{X}^j_s\notin\D$. The integrand of $I_1$ is less than or equal to zero because of \eqref{eq I1 positive}. Through  \eqref{eq a0 bound},
\begin{multline}
g'(\bar{X}^j_s)\bigg[ \big(\hat{b}_0(\bar{X}^j_s) - \bar{b}_1(\bar{X}^j_s,\hat{\mu}_s)\big)f(\bar{X}^j_s - X^j_s) - f'(\bar{X}^j_s - X^j_s)b_2(\bar{X}^j_s)\big(b_2(\bar{X}^j_s) - b_2(X^j_s)\big) \\- \frac{a_0}{2}b_2(\bar{X}^j_s)f(\bar{X}^j_s - X^j_s)\bigg] \geq g(\bar{X}^j_s)c_0 f(\bar{X}^j_s - X^j_s).
\end{multline}
Since $f(-z) = f(z)$ and $f'(-z) = -f(z)$, upon multiplying the above identity by $-g(X^j_s)$,
\begin{multline}
g(X^j_s)g'(\bar{X}^j_s)\left[f(X^j_s - \bar{X}^j_s)(\bar{b}_1(\bar{X}^j_s,\hat{\mu}_s) -\hat{b}_0(\bar{X}^j_s)) +\right.\\ \left.f'(X^j_s - \bar{X}^j_s)b_2(\bar{X}^j_s)(b_2(X^j_s) - b_2(\bar{X}_s^j))\right] +\frac{1}{2} f(X^j_s - \bar{X}^j_s)g'(X^j_s)g'(\bar{X}^j_s)b_2(X^j_s)b_2(\bar{X}^j_s)\\ 
\leq -c_0 h(X^j_s,\bar{X}^j_s)-\frac{1}{2}a_0g(X^j_s)g'(\bar{X}^j_s)b_2(\bar{X}^j_s) f(X^j_s - \bar{X}^j_s)+\\ \frac{1}{2} f(X^j_s - \bar{X}^j_s)g'(X^j_s)g'(\bar{X}^j_s)b_2(X^j_s)b_2(\bar{X}^j_s)
\leq - c_0  h(X^j_s,\bar{X}^j_s),\label{int temp 3}
\end{multline}
since by \eqref{large x g bound},
\begin{equation*}
\frac{1}{2}a_0g(X^j_s)g'(\bar{X}^j_s)b_2(\bar{X}^j_s) f(X^j_s - \bar{X}^j_s)-\frac{1}{2} f(X^j_s - \bar{X}^j_s)g'(X^j_s)g'(\bar{X}^j_s)b_2(X^j_s)b_2(\bar{X}^j_s) \geq 0.
\end{equation*}

Notice that the left hand side of \eqref{int temp 3} is the sum of the integrand of $I_2'$ and half of the integrand of $I_4$. Similarly if $X^j_s\notin\D$, the integrand of $I_1$ is less than or equal to zero, and through \eqref{eq a0 bound} and \eqref{large x g bound},
\begin{multline}
g'(X^j_s)g(\bar{X}^j_s)\left[f(X^j_s - \bar{X}^j_s)(\bar{b}_1(X^j_s,\hat{\mu}_s) -\hat{b}_0(X^j_s)) +\right.\\ \left.f'(X^j_s - \bar{X}^j_s)b_2(X^j_s)(b_2(X^j_s) - b_2(\bar{X}_s^j))\right] +\frac{1}{2} f(X^j_s - \bar{X}^j_s)g'(X^j_s)g'(\bar{X}^j_s)b_2(X^j_s)b_2(\bar{X}^j_s)\\ 
\leq -c_0 h(X^j_s,\bar{X}^j_s)-\frac{1}{2}a_0g'(X^j_s)g(\bar{X}^j_s)b_2(X^j_s) f(X^j_s - \bar{X}^j_s)+\\ \frac{1}{2} f(X^j_s - \bar{X}^j_s)g'(X^j_s)g'(\bar{X}^j_s)b_2(X^j_s)b_2(\bar{X}^j_s)
\leq - c_0  h(X^j_s,\bar{X}^j_s).\label{int temp 4}
\end{multline}
The left hand side of the above is equal to the integrand of $I_2''$ and half of the integrand of $I_4$. Observe that if $\bar{X}^j_s \in \D$, then the left hand side of \eqref{int temp 3} is zero because $g'$ is zero in $\D$. Similarly if $X^j_s \in \D$, then the left hand side of \eqref{int temp 4} is zero because $g'$ is zero in $\D$. These considerations yield the bound \eqref{I1 bound}.

It follows from \eqref{eq c2 bound} that
\begin{equation*}
\E\left[ I_5\right] \leq c_2 \int_0^t \E\left[ h(X^j_s,\bar{X}^j_s)\right]ds.
\end{equation*}

We finish by bounding the $I_3$ term. Suppose that $g(X^j_s) \geq g(\bar{X}^j_s)$. Then using \eqref{eq f differential bound}, \eqref{eq c1 bound 1}-\eqref{eq c1 bound 2} and the triangular inequality
\begin{multline*}
\left| f'(X^j_s - \bar{X}^j_s)g(X^j_s)g(\bar{X}^j_s)\left(b_1(X^j_s,X^k_s) - b_1(\bar{X}^j_s,\bar{X}^k_s)\right)\right| \leq\\
\left| f'(X^j_s - \bar{X}^j_s)\right| g(X^j_s)g(\bar{X}^j_s)\left|b_1(X^j_s,\bar{X}^k_s)- b_1(\bar{X}^j_s,\bar{X}^k_s)\right|\\+ 
\left| f'(X^j_s - \bar{X}^j_s)\right| g(X^j_s)g(\bar{X}^j_s)\left| b_1(X^j_s,X^k_s) - b_1(X^j_s,\bar{X}^k_s)\right|\\
\leq \grave{c}_1 \left| f'(X^j_s - \bar{X}^j_s)\right| g(X^j_s)g(\bar{X}^j_s)f(X^j_s - \bar{X}^j_s)^{\frac{a-1}{a}}\\+\left| f'(X^j_s - \bar{X}^j_s)\right| g(X^j_s)^{\frac{1}{a}}g(\bar{X}^j_s)^{\frac{1}{a}}g(X^j_s)^{2-\frac{2}{a}}\left| b_1(X^j_s,X^k_s) - b_1(X^j_s,\bar{X}^k_s)\right|\\
\leq \grave{c}_1 g(X^j_s)g(\bar{X}^j_s)f(X^j_s - \bar{X}^j_s) +\\ g(X^j_s)^{\frac{1}{a}}g(\bar{X}^j_S)^{\frac{1}{a}}f(X^j_s-\bar{X}^j_s)^{\frac{1}{a}} g(X^j_s)^{2-\frac{2}{a}}\left| b_1(X^j_s,X^k_s) - b_1(X^j_s,\bar{X}^k_s)\right| \\
\leq \grave{c}_1 g(X^j_s)g(\bar{X}^j_s)f(X^j_s - \bar{X}^j_s) +\\ \breve{c}_1f(X^j_s - \bar{X}^j_s)^{\frac{1}{a}}g(X^j_s)^{\frac{1}{a}}g(\bar{X}^j_s)^{\frac{1}{a}}g(X^k_s)^{\frac{a-1}{a}}g(\bar{X}^k_s)^{\frac{a-1}{a}}f(X^k_s - \bar{X}^k_s)^{\frac{a-1}{a}}.
\end{multline*}
We obtain the same inequality when $g(\bar{X}^j_s) \geq g(X^j_s)$. That is,
\begin{multline*}
\left| f'(X^j_s - \bar{X}^j_s)g(X^j_s)g(\bar{X}^j_s)\left(b_1(X^j_s,X^k_s) - b_1(\bar{X}^j_s,\bar{X}^k_s)\right)\right| \leq\\
\left| f'(X^j_s - \bar{X}^j_s)\right| g(X^j_s)g(\bar{X}^j_s)\left|b_1(\bar{X}^j_s,X^k_s)- b_1(\bar{X}^j_s,\bar{X}^k_s)\right|\\+ 
\left| f'(X^j_s - \bar{X}^j_s)\right| g(X^j_s)g(\bar{X}^j_s)\left| b_1(X^j_s,X^k_s) - b_1(\bar{X}^j_s,X^k_s)\right|\\
\leq \grave{c}_1 g(X^j_s)g(\bar{X}^j_s)f(X^j_s - \bar{X}^j_s) +\\ \breve{c}_1f(X^j_s - \bar{X}^j_s)^{\frac{1}{a}}g(X^j_s)^{\frac{1}{a}}g(\bar{X}^j_s)^{\frac{1}{a}}g(X^k_s)^{\frac{a-1}{a}}g(\bar{X}^k_s)^{\frac{a-1}{a}}f(X^k_s - \bar{X}^k_s)^{\frac{a-1}{a}}.
\end{multline*}
Applying Holder's Inequality to the above,
\begin{multline*}
\E\left[f'(X^j_s - \bar{X}^j_s)g(X^j_s)g(\bar{X}^j_s)\left(b_1(X^j_s,X^k_s) - b_1(\bar{X}^j_s,\bar{X}^k_s)\right)\right] \leq\\
\grave{c}_1 \E\left[g(X^j_s)g(\bar{X}^j_s)f(X^j_s - \bar{X}^j_s)\right] +\\ \breve{c}_1 \E\left[g(X^j_s)g(\bar{X}^j_s)f(X^j_s-\bar{X}^j_s)\right]^{\frac{1}{a}}\E\left[g(X^k_s)g(\bar{X}^k_s)f(X^k_s - \bar{X}^k_s)\right]^{\frac{a-1}{a}} \\
=(\grave{c}_1 + \breve{c}_1)\E\left[g(X^j_s)g(\bar{X}^j_s)f(X^j_s - \bar{X}^j_s)\right] 
\end{multline*}
We use Holder's Inequality to see that
\begin{multline*}
\E\left[ f'(X^j_s - \bar{X}^j_s)g(X^j_s)g(\bar{X}^j_s)\left(\sum_{k=1}^N b_1(\bar{X}^j_s,\bar{X}^k_s) - \bar{b}_1(\bar{X}^j_s,\bar{\mu}_s)\right)\right] \leq \\
\E\left[ f'(X^j_s - \bar{X}^j_s)^a g(X^j_s)g(\bar{X}^j_s)\right]^{\frac{1}{a}} \E\left[ g(\bar{X}_s^j)^{\frac{a-1}{a}\times\frac{2aq}{aq-a-q}}\right]^{\frac{aq-a-q}{2aq}}\times\\ \E\left[ g(X_s^j)^{\frac{a-1}{a}\times\frac{2aq}{aq-a-q}}\right]^{\frac{aq-a-q}{2aq}}\times \E\left[\left(\sum_{k=1}^N b_1(\bar{X}^j_s,\bar{X}^k_s) - \bar{b}_1(\bar{X}^j_s,\bar{\mu}_s)\right)^q\right]^{\frac{1}{q}}.
\end{multline*}
where $q$ is the integer that appears in assumption \eqref{eq b1 q bound}.\\
By Assumption \eqref{eq g bound}, 
\[\E\left[ g(\bar{X}_s^j)^{\frac{2(a-1)q}{aq-a-q}}\right]^{\frac{aq-a-q}{aq}}\times \E\left[ g(X_s^j)^{\frac{2(a-1)q}{aq-a-q}}\right]^{\frac{aq-a-q}{aq}}\]
is uniformly bounded for all $s$. Furthermore through Assumption \eqref{eq b1 q bound} and Lemma
 \ref{lemma bound polynomial}, $\E\left[\left(\sum_{k=1}^N b_1(\bar{X}^j_s,\bar{X}^k_s) - \bar{b}_1(\bar{X}^j_s,\bar{\mu}_s)
\right)^q\right]^{\frac{1}{q}}$ is bounded by $\mathfrak{C} N^{\frac{q-1}{q}}$. Finally, using Assumption \eqref{eq f differential bound},
\[
\E\left[ f'(X^j_s - \bar{X}^j_s)^a g(X^j_s)g(\bar{X}^j_s)\right]^{\frac{1}{a}} \leq \E\left[h(X^j_s,\bar{X}^j_s)
\right]^{\frac{1}{a}}.
\]
We thus find that for some constant $C$,
\begin{equation*}
\E\left[ I_3\right] \leq C\int_0^t N^{-\frac{1}{q}}\E\left[h(X^j_s,\bar{X}^j_s)\right]^{\frac{1}{a}}ds.
\end{equation*}
In summary, noting the assumption \eqref{eq c definition}, we now have all the ingredients for \eqref{eq final C }.
\end{proof}
\begin{corollary}\label{corollary1}
Let $l \in  \mathbb{N}^{*}$ and fix $l$ neurons $(i_{1},...,i_{l})\in  \mathbb{N}^{*}$. Under the assumptions of Theorem 1, 
 the law of $(X^{i_{1}}_{t},...,X_{t}^{i_{l}} )$, converges toward $\mu_{t}^{\otimes l}$ for all $t\geq 0$.
 \end{corollary}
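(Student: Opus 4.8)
The plan is to deduce the corollary from Theorem~\ref{theorem major result} by exploiting the synchronous coupling that already underlies that theorem's statement, without needing any new estimate. For each fixed index $i_k$, the particle $X^{i_k}$ from the $N$-particle system \eqref{eqn base system} and the mean-field process $\bar{X}^{i_k}$ from \eqref{eqn limit system} are driven by the \emph{same} Wiener process $W^{i_k}$, so the pair $(X^{i_k}_t,\bar{X}^{i_k}_t)$ is exactly the coupling controlled by Theorem~\ref{theorem major result}. The key structural observation is that the limiting copies $\bar{X}^{i_1},\dots,\bar{X}^{i_l}$ are independent and identically distributed, each with marginal law $\mu_t$; hence the joint law of $(\bar{X}^{i_1}_t,\dots,\bar{X}^{i_l}_t)$ is precisely $\mu_t^{\otimes l}$, and this holds for \emph{every} $N$. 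Thus the single random vector $(\,(X^{i_1}_t,\dots,X^{i_l}_t),\,(\bar{X}^{i_1}_t,\dots,\bar{X}^{i_l}_t)\,)$ simultaneously realizes the joint law of the $l$ genuine particles and the target product measure $\mu_t^{\otimes l}$ on a common probability space.

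First I would quantify the discrepancy of this coupling by summing the estimate of Theorem~\ref{theorem major result} over the $l$ fixed indices:
\[
\sum_{k=1}^{l}\E\left[h(X^{i_k}_t,\bar{X}^{i_k}_t)\right]\;\leq\; l\,K\,N^{-\frac{a}{q(a-1)}},
\]
which tends to $0$ as $N\to\infty$, uniformly in $t$. Since $g\geq 1$ we have $h(x,y)\geq f(x-y)$, and $f$ is nonnegative, continuous and vanishes only at the origin, so $\inf_{|z|\geq\varepsilon}f(z)>0$ for every $\varepsilon>0$. The displayed bound therefore forces $f(X^{i_k}_t-\bar{X}^{i_k}_t)\to 0$ in $L^1$, hence in probability, and by the preceding lower bound this gives $X^{i_k}_t-\bar{X}^{i_k}_t\to 0$ in probability for each $k$; consequently the full vector difference $(X^{i_1}_t,\dots,X^{i_l}_t)-(\bar{X}^{i_1}_t,\dots,\bar{X}^{i_l}_t)$ converges to $0$ in probability.

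To conclude I would invoke the converging-together (Slutsky) lemma in $\R^{l}$: the vector $(\bar{X}^{i_1}_t,\dots,\bar{X}^{i_l}_t)$ has the fixed distribution $\mu_t^{\otimes l}$ independently of $N$, while the difference above tends to $0$ in probability; hence $(X^{i_1}_t,\dots,X^{i_l}_t)$ converges in distribution to $\mu_t^{\otimes l}$, which is exactly the assertion. Equivalently, when $h$ is a metric the same bound shows that the $h$-Wasserstein distance between the law of $(X^{i_1}_t,\dots,X^{i_l}_t)$ and $\mu_t^{\otimes l}$, taken with respect to the product cost $\sum_k h(x_k,y_k)$, tends to $0$, which again yields weak convergence.

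I expect the only genuine subtlety to be the passage from the weighted $L^1$ control provided by $h$ to bona fide weak convergence of the $l$-dimensional joint law. Because $h$ carries the weight $g$ and $f$ is typically a power such as $z^{2k}$ rather than the Euclidean distance, one cannot directly quote a theorem asserting that the $h$-Wasserstein distance metrizes weak convergence for this unbounded cost; the clean way around this is precisely the coupling argument above, which reduces everything to convergence in probability of the coordinate differences (using $g\geq 1$ and that $f$ is bounded below away from the diagonal) and then appeals to the converging-together lemma. The independence of the limiting copies and the $N$-independence of their joint law are exactly what make the tensor-product structure $\mu_t^{\otimes l}$ emerge in the limit.
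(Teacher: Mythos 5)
Your proposal is correct and follows essentially the same route as the paper: both exploit the synchronous coupling, sum the bound of Theorem~\ref{theorem major result} over the $l$ fixed indices, and conclude by noting that $(\bar{X}^{i_1}_t,\dots,\bar{X}^{i_l}_t)$ has law $\mu_t^{\otimes l}$ for every $N$. If anything you are more careful than the paper in the one nontrivial step: the paper passes directly to $\E\bigl[|X^{i_k}_t-\bar{X}^{i_k}_t|^2\bigr]\leq K N^{-\frac{a}{q(a-1)}}$, which tacitly uses $h(x,y)\geq |x-y|^2$ (true in the application where $f(z)=\tfrac14 z^2$ and $g\geq 1$, but not guaranteed by the general assumptions), whereas your detour through $h\geq f$, convergence in probability, and the converging-together lemma only needs $\inf_{|z|\geq\varepsilon}f(z)>0$ --- though note that this last fact does not follow from continuity and $f(z)=0\Leftrightarrow z=0$ alone and should be recorded as an additional (mild) hypothesis on $f$.
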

\begin{proof}
\[
\E\left[ \left|(X^{i_{1}}_{t},...,X^{i_{l}}_{t})-(\bar{X}^{i_{1}}_{t},...,\bar{X}^{i_{l}}_{t})\right|^2\right]
  \leq \sum^{l}_{k=1} \E\left[ \left|X^{i_{k}}_{t}-\bar{X}^{i_{k}}_{t}\right|^2\right]  
	\leq l K N^{-\frac{a}{q(a-1)}},\]
Hence  $\forall t \geq 0$ the law of $( X^{i_{1}}_t,...,X^{i_{l}}_t ) $ converges when $N$ tends to infinity to the law of $ (\bar{X}^{i_{1}}_t,...,\bar{X}^{i_{l}}_t)$  ,  whose law is equal to $\mu_{t}^{\otimes l}$ by definition.
\end{proof}
We present now the lemmas used in the proof of Theorem \ref{theorem major result}.
\begin{lemma}\label{lemma bound polynomial}
Suppose that $(e^j)_{j=1}^\infty$ are independent identically-distributed $\R$-valued random variables such that $\E\left[ (e^j)^q\right] < \infty$ and $\E[e^j] = 0$. Then there exists a constant $\mathfrak{C}$ such that for all $N$
\[
\E\left[ \left(\sum_{j=1}^N e^j\right)^q\right] < \mathfrak{C} N^{q-1}.
\]
\end{lemma}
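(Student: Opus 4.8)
The plan is to expand the $q$-th power multinomially and exploit the independence and mean-zero hypotheses to discard all but a manageable set of terms. Writing $S_N = \sum_{j=1}^N e^j$, I would begin from
\[
\E\left[ S_N^q\right] = \sum_{(j_1,\dots,j_q)\in\{1,\dots,N\}^q} \E\left[ e^{j_1}\cdots e^{j_q}\right],
\]
and observe that, by independence, each summand factorises over the distinct indices occurring in the tuple $(j_1,\dots,j_q)$ as a product of moments $\E[(e^i)^{m}]$, where $m$ is the multiplicity with which index $i$ appears. Since $\E[e^j]=0$, any tuple in which some index occurs with multiplicity exactly one contributes a factor $\E[e^i]=0$, and so that entire summand vanishes.

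Consequently only tuples in which every distinct index has multiplicity at least two survive, and such a tuple can contain at most $\lfloor q/2\rfloor$ distinct values. The counting step then runs as follows: fixing a partition of the positions $\{1,\dots,q\}$ into blocks of common index value, each of size at least two, the number of ways to assign distinct index values from $\{1,\dots,N\}$ to the $r\le\lfloor q/2\rfloor$ blocks is at most $N^{r}\le N^{\lfloor q/2\rfloor}$, while the number of admissible partition structures depends only on $q$. Hence the number of surviving tuples is bounded by $C_q N^{\lfloor q/2\rfloor}$ for some constant $C_q$ depending only on $q$.

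It remains to bound each surviving summand uniformly. For a tuple whose distinct indices carry multiplicities $m_1,\dots,m_r$ with $\sum_{l} m_l = q$, independence and Lyapunov's inequality give
\[
\left|\E\left[ e^{j_1}\cdots e^{j_q}\right]\right| = \prod_{l=1}^r\left|\E\left[(e^{i_l})^{m_l}\right]\right| \le \prod_{l=1}^r \E\left[|e^1|^q\right]^{m_l/q} = \E\left[|e^1|^q\right],
\]
using that the $e^j$ are identically distributed and that $\sum_l m_l = q$. Combining this with the count gives $\E[S_N^q] \le C_q\,\E[|e^1|^q]\,N^{\lfloor q/2\rfloor}$, and since $\lfloor q/2\rfloor \le q-1$ for $q\ge 2$, choosing $\mathfrak{C}$ strictly larger than $C_q\,\E[|e^1|^q]$ yields the claimed bound $\E[S_N^q] < \mathfrak{C}N^{q-1}$.

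I expect the combinatorial counting to be the only genuine obstacle: the crux is the observation that the mean-zero condition forces every surviving index to have multiplicity at least two, which caps the number of distinct indices at $\lfloor q/2\rfloor$ and so produces exactly the saving that reduces the naive $N^q$ to $N^{q-1}$. (As a sanity check, the statement also follows in one line from the Marcinkiewicz--Zygmund inequality, which delivers the stronger $\E|S_N|^q \le C_q N^{q/2}\E|e^1|^q$; but the elementary expansion above keeps the argument self-contained and matches the integer-$q$ setting of \eqref{eq b1 q bound}.)
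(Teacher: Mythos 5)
Your proof is correct, and it rests on the same mechanism as the paper's: expand $\left(\sum_{j=1}^N e^j\right)^q$ multinomially, use independence together with $\E[e^j]=0$ to annihilate most of the $N^q$ terms, and bound each surviving term by the $q$-th moment of $e^1$ (you via factorisation and Lyapunov, the paper via H\"older --- essentially the same estimate). The one genuine difference is the combinatorial count. The paper discards only the tuples in which all $q$ indices are distinct, of which there are $N(N-1)\cdots(N-q+1)$, so its survivor count is $N^q - N(N-1)\cdots(N-q+1) = O(N^{q-1})$, which is exactly the rate the lemma asserts and no more. You discard every tuple containing an index of multiplicity one, so each surviving tuple uses at most $\lfloor q/2\rfloor$ distinct indices and the survivor count is $O(N^{\lfloor q/2\rfloor})$; this is strictly sharper for $q\geq 3$ and recovers the Marcinkiewicz--Zygmund rate $N^{q/2}$ you mention, with the lemma's bound following a fortiori since $\lfloor q/2\rfloor\leq q-1$. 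The paper's cruder count avoids the partition-counting step but wastes the extra cancellation; both arguments are elementary and complete, and nothing in your write-up is missing or wrong.
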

\begin{proof}
Consider the binomial expansion of $ \left(\sum_{j=1}^N e^j\right)^q$. There are $N^q$ terms in total. The expectation of at least $N\times (N-1)\times (N-2)\times \ldots (N-q+1)$ of these must be zero, as the constituent factors are all independent. Let $(j_i)_{i=1}^q$, $1\leq j_i \leq N$, be an arbitrary set of indices. Then through Holder's Inequality,
\[
\E\left[ \prod_{p=1}^q e^{j_p}\right] \leq \E\left[ (e^1)^q\right].
\]
Thus
\begin{multline*}
\E\left[ \left(\sum_{j=1}^N e^j\right)^q\right] \leq\\ \E\left[ (e^1)^q\right]\times\left( N^q - N\times (N-1)\times (N-2)\times \ldots (N-q+1)\right) \\
\leq  \E\left[ (e^1)^q\right]\times \left( N^q - (N-q+1)^q\right)\\
=  \E\left[ (e^1)^q\right]N^{q-1}\left( N - N\left( 1 - \frac{q-1}{N}\right)^{q-1}\right) 
\leq \E\left[ (e^1)^q\right]N^{q-1}(q-1)(q-2).
\end{multline*}
\end{proof}

The following lemma is an easy generalization of a result in \cite{veretennikov:06}.
\begin{lemma}\label{lemma ut}
Suppose that $u$ is continuous and satisfies, for some constants $\mathcal{C},c > 0$ and positive integer $a > 1$, for all $t< T$, 
\[
u(T) - u(t) \leq \int_t^T -cu(s)+\mathcal{C}u(s)^{\frac{1}{a}}ds. 
\]
Furthermore $u(0) = 0$. Then for all $t\geq 0$
\[
u(t) \leq \left(\frac{\mathcal{C}}{c}\right)^{\frac{a}{a-1}}.
\]
\end{lemma}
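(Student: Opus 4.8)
The plan is to identify the constant $M := (\mathcal{C}/c)^{a/(a-1)}$ as the unique positive root of the autonomous rate function governing the integral inequality, and then to run a last-crossing-time (barrier) argument showing that $u$ can never strictly exceed $M$. Throughout I use that $u\geq 0$, which holds in the application since $u(s)=\E\left[h(X^j_s,\bar{X}^j_s)\right]$ and $h\geq 0$; this makes the fractional power $u^{1/a}$ well defined.

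First I would introduce $\phi(v) := -cv + \mathcal{C}v^{1/a}$ for $v\geq 0$ and rewrite the hypothesis as $u(T)-u(t)\leq \int_t^T \phi(u(s))\,ds$ for all $t<T$. Factoring $\phi(v) = v^{1/a}\bigl(\mathcal{C} - c\,v^{(a-1)/a}\bigr)$ immediately exhibits the sign structure: $\phi(0)=0$, $\phi(v)>0$ for $0<v<M$, $\phi(M)=0$, and crucially $\phi(v)<0$ for every $v>M$, since $\mathcal{C}-c\,v^{(a-1)/a}<0$ precisely when $v^{(a-1)/a}>\mathcal{C}/c$, i.e.\ $v>M$. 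A direct computation confirms $\phi(M)=0$, as $\mathcal{C}M^{1/a} = \mathcal{C}^{a/(a-1)}c^{-1/(a-1)} = cM$.

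Next I would argue by contradiction. Suppose $u(t_0)>M$ for some $t_0>0$. Since $u(0)=0<M$ and $u$ is continuous, the set $\{s\in[0,t_0]:u(s)\leq M\}$ is nonempty and closed; let $t_1$ denote its supremum. Because $u(t_0)>M$, continuity forces $t_1<t_0$, $u(t_1)=M$, and $u(s)>M$ for all $s\in(t_1,t_0]$. Applying the rewritten hypothesis on $[t_1,t_0]$ gives
\[
u(t_0)-u(t_1)\leq \int_{t_1}^{t_0}\phi(u(s))\,ds.
\]
On $(t_1,t_0)$ we have $u(s)>M$, hence $\phi(u(s))<0$, so the integral is $\leq 0$. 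This yields $u(t_0)\leq u(t_1)=M$, contradicting $u(t_0)>M$. Therefore $u(t)\leq M$ for all $t\geq 0$, which is the assertion.

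The part requiring the most care is establishing the sign structure of $\phi$ — in particular verifying that $M$ is exactly the positive root, so that $\phi$ is strictly negative above $M$ — together with the construction of the last-crossing time $t_1$ satisfying $u(t_1)=M$. The latter rests essentially on the continuity of $u$ and the initial condition $u(0)=0<M$, which guarantees that $u$ must cross the level $M$ before reaching any value above it. Once these two ingredients are in place, the contradiction is immediate and no further estimates on $\mathcal{C}$, $c$, or $a$ are needed.
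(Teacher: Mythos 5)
Your proof is correct, but it takes a genuinely different route from the paper. The paper differentiates the integral inequality (asserting, without detailed justification, that $u$ is differentiable with $\dot{u}\leq -cu+\mathcal{C}u^{1/a}$), substitutes $v(t)=u(t)e^{ct}$, and integrates the resulting Bernoulli-type differential inequality by separating variables, which forces it to handle separately the possibility that $v$ vanishes before dividing by $v^{1/a}$. You instead run a barrier argument: you identify $M=(\mathcal{C}/c)^{a/(a-1)}$ as the positive zero of $\phi(v)=-cv+\mathcal{C}v^{1/a}$, note $\phi<0$ above $M$, and use a last-crossing time $t_1$ with $u(t_1)=M$ to derive a contradiction directly from the two-point integral inequality. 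Your route is more elementary and arguably more robust: it uses only continuity and the integral form of the hypothesis, sidestepping both the differentiability claim and the division-by-zero issue in the paper's argument. What the paper's computation buys in exchange is a quantitative transient estimate, namely $u(t)\leq \left(\frac{\mathcal{C}}{c}\right)^{\frac{a}{a-1}}e^{-ct}\left(e^{\frac{ct(a-1)}{a}}-1\right)^{\frac{a}{a-1}}$, which shows how the bound is approached from below for small $t$; your argument yields only the uniform ceiling, which is all the lemma asserts. Both proofs rely on $u\geq 0$ so that $u^{1/a}$ is meaningful, and you are right to flag this explicitly.
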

\begin{proof}
It may be seen that $u$ is differentiable, with the derivative satisfying
\[
\dot{u}(t) \leq - c u(t) + \mathcal{C}u(t)^{\frac{1}{a}}.
\]
Let $v(t) = u(t)\exp(ct)$. Then
\begin{equation*}
\dot{v}(t) \leq \mathcal{C}v(t)^{\frac{1}{a}}v\exp\left( \frac{(a-1)ct}{a}\right).
\end{equation*}
If $v(t)=0$ then there is nothing to show. Thus we may assume that for all $t>0$, $v(t) > 0$. Hence
\[
\dot{v}(t) v(t)^{-\frac{1}{a}} \leq \mathcal{C}\exp\left( \frac{ct(a-1)}{a}\right).
\]
Upon integration,
\[
\frac{a}{a-1} v(t)^{\frac{a-1}{a}} \leq \frac{a}{a-1}\frac{\mathcal{C}}{c}\left( \exp\left(\frac{ct(a-1)}{a}\right) - 1 \right)\leq\frac{a}{a-1}\frac{\mathcal{C}}{c}\exp\left(\frac{ct(a-1)}{a}\right).
\]
Thus 
\[
v(t) \leq \left(\frac{\mathcal{C}}{c}\right)^{\frac{a}{a-1}}\exp(ct).
\]
\end{proof}
\section{Application}\label{application}
In this section we are going to provide 
 an example of a system satisfying the requirements of Section \ref{Sect assumptions}, so that the result of Theorem
 \ref{theorem major result} will apply.
We start by defining the following functions.\\
For all $x\in \R$, let $f(x):=\frac{1}{4}x^2.$ Let $\D=[-A,A]$ for some $A\gg 0$.We take $a=2$ and $q=3$. Define the sigmoid function
$S(x):=\frac{1}{1+exp(-x)}$, it is clear that $S$ is of class $C^\infty$ , $0<S(x)<1$ and its derivative is bounded and positive. Using this, we define
$$g(x)=\left\{\begin{array}{lll}
1 \qquad\mbox{if}\qquad x\in\D  \\
S(-A-x)+\frac{1}{2} \qquad\mbox{if}\qquad x<-A     \\
S(-A+x)+\frac{1}{2} \qquad\mbox{if}\qquad x>A 
\end{array}\right. 
$$

The function $g$ is continuous on $\R$, $1\leq g(x) \leq \frac{3}{2}$, its derivative $g'$ is bounded, negative for $x<-A$ and positive 
for $x>A$.

We consider a population of $N$ neurons, with evolution equation 
\begin{equation}
dV^j_t = (-\frac{1}{\tau}V^j_t + \frac{1}{N}\sum_{k=1}^N J(V_t^j,V_t^k)S(V_t^k)+I(t))dt +
 \sigma dW^j_t, \label{eqn example1}
\end{equation}
where $V_t^j$ is the membrane potential of neuron $j$, $I(t)$ is the deterministic input current. $J(V_t^j,V_t^k)$ denotes
 the synaptic weight from neuron $k$ to neuron $j$. The function $J: \R\times\R\rightarrow\R$  is assumed to be of class  $C^1$
in both variables, such that both it and its derivative are bounded. \\
The above assumptions are sufficient for the requirements of Section \ref{Sect assumptions} to be satisfied. In particular,
 using the Mean Value Theorem, one can easily verify the bounds \ref{eq c1 bound 1} and \ref{eq c1 bound 2}. 
Morever, one can refer to \cite{bolley-gentil-etal:13} and verify that assumption \ref{assumption one} is satisfied. It then follows, using Theorem \ref{theorem major result}, that for all $t>0$
\[
\E\left[ (V^j_t-\bar{V}^j_t)^2\right] \leq 4K N^{-\frac{2}{3}},
\]
In other words, the law of an individual neuron converges to its limit as $N\to \infty$ at the
 time-uniform rate given above.

\newpage

\end{document}